\numberwithin{equation}{section} \allowdisplaybreaks
\begin{document}
\newtheorem{theorem}{Theorem}[section]
\newtheorem{defin}{Definition}[section]
\newtheorem{prop}{Proposition}[section]
\newtheorem{corol}{Corollary}[section]
\newtheorem{lemma}{Lemma}[section]
\newtheorem{rem}{Remark}[section]
\newtheorem{example}{Example}[section]
\title{A note on submanifolds of generalized K\"ahler manifolds}
\author{{\small by}\vspace{2mm}\\Izu Vaisman}
\date{}
\maketitle
{\def\thefootnote{*}\footnotetext[1]%
{{\it 2010 Mathematics Subject Classification: 53C15} .
\newline\indent{\it Key words and phrases}: generalized F structure, generalized CRF structure, generalized CRFK structures, CR-submanifolds.}}
\begin{center} \begin{minipage}{12cm}
A{\footnotesize BSTRACT. In this note, we consider submanifolds of a generalized K\"ahler manifold that are CR-submanifolds for the two associated Hermitian structures. Then, we establish the conditions for the induced, generalized F structure to be a CRFK structure. The results extend similar conditions which we obtained for hypersurfaces in an earlier paper.}
\end{minipage}
\end{center} \vspace{5mm}
\section{Introduction}
This note is a complement to our previous paper \cite{Vnou}. All manifolds and mappings are of class $C^\infty$ and the terminology and notation are classical \cite{KN}. An exception is the use of Cartan's conventions for exterior products and differentials, e.g.,
$$\begin{array}{l}
\alpha\wedge\beta(X,Y)=\alpha(X)\beta(Y)-\alpha(Y)\beta(X), \vspace*{2mm}\\ d\alpha(X,Y)=X\alpha(Y)-Y\alpha(X)-\alpha([X,Y]).\end{array}$$
Furthermore, we shall assume that the reader is familiar with the basic notions and facts of generalized geometry in the sense of Hitchin as they already appeared in many papers. In particular, we shall refer to \cite{{Gualt},{V},{V1},{V3}}.

In this note we consider a class
of submanifolds of a generalized K\"ahler manifold, which bear a naturally induced generalized metric F structure and we study the conditions for the induced structure to be a CRFK structure\footnote{CR stands for Cauchy-Riemann, F stands for Yano's F structure and K comes from K\"ahler.} in the sense of \cite{V1}. In \cite{Vnou} we studied this problem in the case of hypersurfaces.

First, we shall deduce a result in the classical framework. Namely, we consider a CR-submanifold of a Hermitian manifold and its induced F structure \cite{Bej} and we establish the conditions for the latter to be classical CRF in the sense of \cite{V1}. As a corollary, it follows that these conditions hold for totally geodesic and totally umbilical CR-submanifolds. Then, we shall consider generalized CR-submanifolds of a generalized K\"ahler manifold, i.e. submanifolds that have the CR property for the two associated Hermitian structures. Generalized CR-submanifolds have an induced, generalized F structure and we establish the conditions for the induced structure to be CRFK. As a corollary, it follows that, if the generalized CR-submanifold is totally geodesic, the induced structure is a generalized CRFK structure.
\section{Generalized CR-submanifolds}
Let $M^{2n}$ be a generalized almost Hermitian manifold, with the generalized Riemannian metric $G$ and the compatible generalized almost complex structure $\mathcal{J}$. Then, the following results hold \cite{Gualt}.

$G$ is equivalent to $\mathcal{G}\in End(\mathbf{T}M)$ ($\mathbf{T}M=TM\oplus T^*M$) defined by
$$ G(\mathcal{G}\mathcal{X},\mathcal{Y})=g(\mathcal{X},\mathcal{Y})=
\frac{1}{2}(\alpha(Y)+\beta(X)),$$
where $\mathcal{X}=(X,\alpha),\mathcal{Y}=(Y,\beta)\in\mathbf{T}M$ and
$$\mathcal{G}^2=Id,\,g(\mathcal{G}\mathcal{X},\mathcal{G}\mathcal{Y})= g(\mathcal{X},\mathcal{Y}).$$ \indent
$G$ is also equivalent to a pair $(\gamma,\psi)$ where $\gamma$ is a Riemannian metric and $\psi$ is a $2$-form on $M$. The equivalence is via the $\pm1$-eigenbundles of $\mathcal{G}$
$$ V_\pm=\{(X,\flat_{\psi\pm\gamma}X), X\in TM\}\; (\flat_{\psi\pm\gamma}X=i(X(\psi\pm\gamma))$$ and the projections $\tau_\pm=pr_{TM}: V_\pm\rightarrow TM$ are {\it transfer isomorphisms}.

For the structure $\mathcal{J}$ one has
$$\mathcal{J}^2=-Id,\;g(\mathcal{J}\mathcal{X},\mathcal{Y}) +g(\mathcal{X},\mathcal{J}\mathcal{Y})=0,\; G(\mathcal{J}\mathcal{X},\mathcal{J}\mathcal{Y})= G(\mathcal{X},\mathcal{Y}).$$
The bundles $V_\pm$ are $\mathcal{J}$-invariant and the transfer by $\tau_\pm$ produces two $\gamma$-compatible almost complex structures $J_\pm$ of $M$ such that
$$ \mathcal{J}(X,\flat_{\psi\pm\gamma}X)=(J_\pm X,\flat_{\psi\pm\gamma}(J_\pm X)).$$
Thus, $(G,\mathcal{J})$ is equivalent to the quadruple $(\gamma,\psi,J_\pm)$.

Furthermore, a complementary, $G$-compatible, generalized almost complex structure is defined by $\mathcal{J}'=\mathcal{G}\circ\mathcal{J}=\mathcal{J}\circ\mathcal{G}$ and $\mathcal{J}\circ\mathcal{J}'=\mathcal{J}'\circ\mathcal{J},\,\mathcal{G} =-\mathcal{J}\circ\mathcal{J}'$. The complementary structure corresponds to $(\gamma,\psi,J_+,-J_-)$.

On an arbitrary manifold $M$, a {\it generalized F structure} $\mathcal{F}\in End\,\mathbf{T}M$ \cite{V1} is defined by the conditions
$$\mathcal{F}^3+\mathcal{F}=0,\; g(\mathcal{F}\mathcal{X},\mathcal{Y}) +g(\mathcal{X},\mathcal{F}\mathcal{Y})=0$$
and the structure is {\it metric} with respect to a generalized Riemannian metric $G$ if
$$ G(\mathcal{F}\mathcal{X},\mathcal{Y})+G(\mathcal{X},\mathcal{F}\mathcal{Y})=0.
$$
Then, like in the almost Hermitian case, there exists a complementary generalized metric F structure $\mathcal{F}'=\mathcal{G}\circ\mathcal{F}$.

By Proposition 4.2 of \cite{V1}, $(\mathcal{F},G)$ is a generalized metric F structure iff there exists two classical metric F structures $(F_\pm,\gamma)$ on $M$, i.e.,
\begin{equation}\label{Fclasmetric}	 F_\pm^3+F_\pm=0,\;
\gamma(F_\pm X,Y)+\gamma(X,F_\pm Y)=0\;(X,Y\in TM)\end{equation}
and the generalized F structure is given by $$\mathcal{F}(X,\flat_{\psi \pm\gamma}X)=(F_\pm X,\flat_{\psi\pm\gamma}(F_\pm X)),\;\forall X\in TM.$$

Now, let $\iota:N^k\hookrightarrow M$ be a submanifold of $M$ and let $\nu N=T^{\perp_\gamma}N$ be the normal bundle of $N$. Then, $T_NM=TN\oplus\nu N$ and we shall identify $T^*N=ann\,\nu N,\nu^* N=ann\,TN,\mathbf{T}N=TN\oplus ann\,\nu N$. It follows easily that
$$\mathbf{T}^{\perp_g}N=\nu N\oplus ann\,TN,$$
hence, the restriction $g|_{\mathbf{T}N}$ coincides with the pairing metric on the manifold $N$, thus, it is non degenerate, and
\begin{equation}\label{gGdesc} \mathbf{T}_NM=\mathbf{T}N\oplus\mathbf{T}^{\perp_g}N. \end{equation}

The metric $G$ induces a generalized Riemannian metric $G'$ on $N$ that corresponds to the induced pair $(\gamma'=\iota^*\gamma,\psi'=\iota^*\psi)$ and has the $\pm1$-eigenbundles $$V'_\pm=\{(X,pr_{ann\,\nu N}(\flat_{\psi\pm\gamma}X))\,/\,X\in TN\}= pr_{\mathbf{T}N}V_\pm,$$ where the projection is defined by (\ref{gGdesc}) (e.g., \cite{V4}). In the particular case $\psi=0$, we get $V'_\pm=V_\pm\cap\mathbf{T}N$, we have
\begin{equation}\label{psi01} \mathbf{T}N=(V_+\cap\mathbf{T}N)\oplus(V_-\cap\mathbf{T}N) \subseteq \mathbf{T}M\end{equation}
and $G'$ is induced by $G$ via the inclusion (\ref{psi01}).

Now, we define the class of submanifolds that we want to study.
\begin{defin}\label{defmfsb} {\rm 1. If $(M,\gamma,J)$ is a classical almost Hermitian manifold, a submanifold $\iota:N\hookrightarrow M$ is called a {\it CR-submanifold} if the equality
\begin{equation}\label{restrcond} TN=(TN\cap J(TN))\oplus (TN\cap J(\nu N))\end{equation}
holds at every point of $N$ and the rank of the terms is constant.

2. If $(M,\gamma,\psi,J_\pm)$ is a generalized almost Hermitian manifold, a submanifold $\iota:N\hookrightarrow M$ is called a {\it generalized CR-submanifold} if it is a CR-submanifold with respect to the two almost Hermitian structures $(\gamma,J_\pm)$.}\end{defin}

Part 1 of Definition \ref{defmfsb} is equivalent to Bejancu's original definition \cite{Bej}, the distributions $D,D^\perp$ of \cite{Bej} being the terms of the direct sum (\ref{restrcond}). Among the examples of CR-submanifolds we notice the hypersurfaces and the $\Omega$-coisotropic submanifolds ($\Omega(X,Y)=\gamma(JX,Y)$ is the K\"ahler form). The CR terminology is justified by the fact that, if $J$ is integrable with $i$-eigenbundle $L\subseteq T^cM$, then, $L\cap T^cN$ is a CR structure (the index $c$ denotes complexification). In the particular case $\psi=0$, if we apply the transfer $\tau_\pm^{-1}$ to the equalities (\ref{restrcond}) for $J_\pm$ and use (\ref{psi01}), we can see that the generalized CR-submanifolds are characterized by
$$\mathbf{T}N=(\mathbf{T}N\cap\mathcal{J}(\mathbf{T}N)) \oplus(\mathbf{T}N\cap\mathcal{J}(\nu N\oplus\nu^*N)),$$
hence, if $\psi=0$, a generalized CR-submanifold is an F submanifold in the sense of \cite{V1}.

If (\ref{restrcond}) holds, $N$ has the induced metric F structure
\begin{equation}\label{clasind} F|_{TN\cap J_\pm(TN)}= J|_{TN\cap J_\pm(TN)},\;F|_{TN\cap J(\nu N)}=0. \end{equation}
Notice that $F$ of (\ref{clasind}) coincides with the tensor $\phi$ of \cite{Bej}.

In the generalized case, the use of $J_\pm$ in (\ref{clasind}) yields two structures $F_\pm$ and we get an {\it induced} generalized metric F structure $\mathcal{F}$ defined by the quadruple $(\gamma,\psi,F_\pm)$.
\begin{center}
\section{Submanifolds of generalized K\"ahler manifolds}\end{center}
The generalized almost complex structure $\mathcal{J}$ may be identified with its $\pm i$-eigenbundles $\mathcal{L},\bar{\mathcal{L}}$ (the bar denotes complex conjugation) and in the generalized almost Hermitian case of $(G,\mathcal{J},\mathcal{J}')$ one has \cite{Gualt}
$$\mathcal{L}=(\mathcal{L}\cap V_+)\oplus(\mathcal{L}\cap V_-),\,\mathcal{L}'=(\mathcal{L}\cap V_+)\oplus(\bar{\mathcal{L}}\cap V_+).$$

The structure $\mathcal{J}$ is {\it integrable} ({\it generalized complex}), respectively $(G,\mathcal{J},\mathcal{J}')$ is {\it generalized Hermitian}, if $\mathcal{L}$ is closed under Courant brackets. Furthermore, $(G,\mathcal{J},\mathcal{J}')$ is {\it generalized K\"ahler} if $\mathcal{J},\mathcal{J}'$ are integrable, which turns out to be equivalent to the following pair of properties: (i) the pairs $(\gamma,J_\pm)$ are Hermitian structures, (ii) for the Hermitian structures $(\gamma,J_\pm)$ one has
\begin{equation}\label{relpsiJ} \gamma(\nabla^\gamma_XJ_\pm(Y),Z)=\mp\frac{1}{2}[d\psi(X,J_\pm Y,Z)+d\psi(X,Y,J_\pm Z)],
\end{equation}
where $\nabla^\gamma$ is the Levi-Civita connection of $\gamma$ \cite{{Gualt},{V3}}.
If the $2$-form $\psi$ is closed, $M$ is a {\it bi-K\"ahlerian manifold}, i.e., a manifold with two K\"ahler structures with the same Riemannian metric $\gamma$.

The generalized F structure $\mathcal{F}$ may be identified with its $(\pm i,0)$-eigenbundles $\mathcal{E},\bar{\mathcal{E}},\mathcal{S}$ and $\mathcal{F}$ is {\it integrable {\rm or} CRF} if $\mathcal{E}$ is closed under Courant brackets \cite{V1}. Furthermore, the generalized metric F structure $(G,\mathcal{F},\mathcal{F}')$ is a {\it generalized CRFK structure} if $\mathcal{F},\mathcal{F}'$ are integrable and the eigenbundles of $\mathcal{G},\mathcal{F}$ satisfy the Courant bracket condition
$$[V_+\cap\mathcal{S},V_-\cap\mathcal{S}]\subseteq\mathcal{S}.$$ These properties are equivalent to the pair of properties \cite{V1} (a) the corresponding structures $F_\pm$ are classical CRF structures, i.e., $\mathcal{F}_\pm(X,\alpha)=(F_\pm X,-\alpha\circ F_\pm)$ are generalized CRF structures, (b) one has the equalities
\begin{equation}\label{CRFK6} \gamma(F_\pm(\nabla^\gamma_XF_\pm)Y,Z)= \pm\frac{1}{2}[d\psi(X,Y,F^2_\pm Z)+d\psi(X,F_\pm Y,F_\pm Z)].\end{equation}
If the form $\psi$ is closed, $M$ is a {\it partially bi-K\"ahlerian submanifolds}, i.e., a Riemannian manifold such that its metric $\gamma$ has two de Rham decompositions that have one K\"ahlerian term \cite{V1}.

Hereafter, we shall assume that $(M,G,\mathcal{J},\mathcal{J}')$ is a generalized K\'ahler manifold, $N$ is a generalized CR-submanifold and $(G',\mathcal{F})$ is the induced structure. Then, we will look for the conditions that characterize the case where the induced structure is a CRFK structure
and we begin by the following preparations.

Riemannian geometry gives us the Gauss-Weingarten equations along the submanifold $N$ of the Riemannian manifold $(M,\gamma)$,
\begin{equation}\label{G-W} \nabla^\gamma_XY=\nabla^{\gamma'}_XY+b(X,Y),\;\nabla^\gamma_XU=-W_U X+\nabla^\nu_XU,\end{equation} where $X,Y\in TN,U\in\nu N$, $\nabla^\gamma,\nabla^{\gamma'}$ are the Levi-Civita connections of the metrics $\gamma,\gamma'=\iota^*\gamma$,
$\nabla^\nu$ is the induced connection of the normal bundle of $N$ and $b(X,Y)=b(Y,X)\in\nu N,W_\nu X\in TN$ are the $\nu N$-valued second fundamental form and the Weingarten operator, respectively. The latter are related by the formula $\gamma(W_U X,Y)=\gamma(b(X,Y),U)$.

Using these equations, we can extend the proof of Proposition 3.2 of \cite{Vnou} and get the following result.
\begin{theorem}\label{CRFclind} Let $\iota:N\hookrightarrow M$ be a CR-submanifold of the Hermitian manifold $(M,\gamma,J)$. Then, the induced structure $F$ of $N$ is a classical CRF structure iff
\begin{equation}\label{eqclind} d\Omega(X,Y,Z)=d\Omega(JX,JY,Z),\;\gamma(b(FX,FY)-b(X,Y),JZ)=0,\end{equation}
for all $X,Y\in TN\cap(JTN)$ and $Z\in TN\cap(J\nu N)$.\end{theorem}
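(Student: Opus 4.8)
The plan is to unwind the definition of ``classical CRF'' into a Courant-bracket closure condition for the $+i$-eigenbundle of the associated generalized structure $\mathcal{F}(X,\alpha)=(FX,-\alpha\circ F)$, and then translate that closure, via the Gauss--Weingarten equations (\ref{G-W}), into the two relations of (\ref{eqclind}); this parallels the hypersurface argument of Proposition 3.2 of \cite{Vnou}. Write $D=TN\cap J(TN)$ and $D^{\perp}=TN\cap J(\nu N)$, so that by (\ref{restrcond}) $TN=D\oplus D^{\perp}$ is a $\gamma$-orthogonal splitting with $J(D)=D$, $J(D^{\perp})\subseteq\nu N$, and by (\ref{clasind}) $F=J$ on $D$, $\ker F=D^{\perp}$. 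Complexifying, $D^{c}=E_{+i}\oplus E_{-i}$ with $E_{\pm i}$ the $\pm i$-eigenbundles of $F$, and the eigenbundle of $\mathcal F$ is $\mathcal{E}=E_{+i}\oplus\mathrm{Ann}\!\left(E_{+i}\oplus (D^{\perp})^{c}\right)$.

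First I would dispose of the tangential part of the closure. Since $(M,\gamma,J)$ is Hermitian, its $+i$-eigenbundle $L\subset T^{c}M$ is involutive and $E_{+i}=L\cap T^{c}N$; as brackets of sections of $T^{c}N$ stay tangent, $[E_{+i},E_{+i}]\subseteq L\cap T^{c}N=E_{+i}$, so the vector part of the bracket condition is automatic. The remaining content of the closure of $\mathcal{E}$ is that for $X\in\Gamma(E_{+i})$ and $\beta\in\mathrm{Ann}(E_{+i}\oplus(D^{\perp})^{c})$ the form $i_{X}d\beta+\tfrac12 d(i_{X}\beta)$ again annihilates $E_{+i}\oplus (D^{\perp})^{c}$. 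Parametrizing the annihilator by $\beta=\flat_{\gamma}V$ with $V\in\Gamma(E_{+i})$ (isotropy of $E_{+i}$ and $D\perp D^{\perp}$ make $\flat_{\gamma}V$ vanish there), isotropy also gives $i_{X}\beta=\gamma(V,X)\equiv 0$, so the condition collapses to $d\beta(X,W)=0$ for $W\in E_{+i}$ and $W\in(D^{\perp})^{c}$.

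The $W\in E_{+i}$ case is then automatic: with $\beta=\gamma(V,\cdot)$ one has $d\beta(X,W)=X\gamma(V,W)-W\gamma(V,X)-\gamma(V,[X,W])$, and all three terms vanish by isotropy of $E_{+i}$ and involutivity of $L$. The effective condition is therefore $d\beta(X,Z)=\gamma(\nabla^{\gamma'}_{X}V,Z)-\gamma(\nabla^{\gamma'}_{Z}V,X)=0$ for $X,V\in E_{+i}$ and $Z\in(D^{\perp})^{c}$; since $b(\cdot,\cdot)\in\nu N$ is $\gamma$-orthogonal to $TN$, one may replace $\nabla^{\gamma'}$ by the ambient $\nabla^{\gamma}$ in both terms. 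Writing $V=\eta-iJ\eta$, $X=\xi-iJ\xi$ with $\xi,\eta\in D$ and expanding $\nabla^{\gamma}_{X}(J\eta)=(\nabla^{\gamma}_{X}J)\eta+J\nabla^{\gamma}_{X}\eta$, the piece $J\,b(X,\eta)$ pairs with $Z$ as $-\gamma(b(X,\eta),JZ)$ with $JZ\in\nu N$, producing the second fundamental form terms, while the Hermitian identity (valid since $J$ is integrable), of the form $2\gamma((\nabla^{\gamma}_{X}J)Y,Z)=d\Omega(X,JY,JZ)-d\Omega(X,Y,Z)$, supplies the $d\Omega$ terms. Taking real and imaginary parts and using $\gamma(W_{U}X,Y)=\gamma(b(X,Y),U)$, the $\nabla^{\gamma}J$ group assembles into $d\Omega(X,Y,Z)=d\Omega(JX,JY,Z)$ and the normal group into $\gamma(b(FX,FY)-b(X,Y),JZ)=0$, i.e. exactly (\ref{eqclind}); reading the computation backwards gives sufficiency.

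I expect the main obstacle to be this last step: the careful separation of $d\beta(X,Z)=0$ into a ``$\nabla^{\gamma}J$'' part and a ``second fundamental form'' part, tracking which summands land in $TN$ versus $\nu N$ and systematically using $J(D)=D$, $J(D^{\perp})\subseteq\nu N$, and $D\perp D^{\perp}$, so that the intermediate $J\nabla^{\gamma'}\eta$ contributions cancel and leave precisely the two stated groups. A secondary subtlety is verifying rigorously that the real/imaginary splitting together with the already-settled $W\in E_{+i}$ direction introduces no independent relation beyond (\ref{eqclind}), which is what secures the equivalence in both directions.
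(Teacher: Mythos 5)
Your overall strategy runs parallel to the paper's proof: the equivalence you re-derive by hand from the Courant-bracket closure of $\mathcal{E}=E_{+i}\oplus\mathrm{Ann}(E_{+i}\oplus Q^c)$ is exactly the pair of conditions (\ref{clCRF}) that the paper simply quotes from \cite{V1} ($[H,H]\subseteq H$, automatic from integrability of $J$, as you observe, plus $[H,Q^c]\subseteq H\oplus Q^c$). Indeed, your reduction of the form part to $d\beta(X,Z)=-\gamma(V,[X,Z])=0$ for all $V\in E_{+i}$ says precisely $[X,Z]\in E_{+i}\oplus Q^c$, since the common $\gamma$-orthogonal of $E_{+i}$ in $T^cN$ is $E_{+i}\oplus Q^c$; in real form this is the paper's condition (\ref{aux1}). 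Your subsequent plan --- Gauss--Weingarten (\ref{G-W}) plus the Hermitian identity --- is also the paper's.

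The genuine gap sits exactly where you placed your ``main obstacle,'' and the mechanism you propose for it would fail: the real/imaginary splitting cannot separate the $d\Omega$ group from the $b$ group. With $X=\xi-iJ\xi$, $V=\eta-iJ\eta$, the imaginary part of $\gamma(V,[X,Z])=0$ is the real part with $\xi$ replaced by $J\xi$ (use $J^2=-\mathrm{Id}$ on $D$), so the complexified condition is equivalent to the \emph{single} real family (\ref{aux1}) and carries no second independent relation. After Gauss--Weingarten and the Hermitian identity one therefore lands on \emph{one} combined identity mixing $d\Omega$ and $b$ terms --- the analogue of the paper's (\ref{clCRF3}) --- and the split into the two equations of (\ref{eqclind}) comes from a different observation, which your sketch omits: the $d\Omega$ side of (\ref{clCRF3}) is skew-symmetric in $X,Y$ while the $b$ side is symmetric (symmetry of the second fundamental form), so each side must vanish separately, and the substitution $Y\mapsto JY$ then produces exactly (\ref{eqclind}). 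Without this symmetric/skew argument your computation establishes that (\ref{eqclind}) implies CRF, but not the converse, so the ``iff'' is not secured. Two minor repairs along the way: with the paper's conventions your Hermitian identity has the wrong sign --- (\ref{eqdinKN}) reads $2\gamma(\nabla^\gamma_XJ(Y),Z)=d\Omega(X,Y,Z)-d\Omega(X,JY,JZ)$ --- and to assemble the $\nabla J$ terms into the form (\ref{clCRF3}) one also needs the auxiliary identity (\ref{eqptdOmega}).
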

\begin{proof}
As earlier, $\Omega(X,Y)=\gamma(JX,Y)$ is the K\"ahler form. Following \cite{V1}, the structure $F$ is classical CRF iff
\begin{equation}\label{clCRF} [H,H]\subseteq H,\;[H,Q^c]\subseteq H\oplus Q^c,\end{equation}
where $H,\bar{H},Q$ are the $\pm i,0$-eigenbundles of $F$ and the brackets are Lie brackets.

The definition (\ref{clasind}) of $F$ shows that $Q=TN\cap(J\nu N)$, $H\oplus\bar{H}$ is the complexification of $P=im\,F=TN\cap(JTN)$ and $H=L\cap T^cN$, where $L$ is the $i$-eigenbundle of $J$.
In particular, the integrability of $J$ ($M$ is Hermitian) implies the first condition (\ref{clCRF}) and we have to take care of the second condition only.

The second condition (\ref{clCRF}) is equivalent to \cite{V1}
\begin{equation}\label{clCRF2} F[FX,Z]-F^2[X,Z]=0,\;\forall X\in P, Z\in Q. \end{equation}
Because of the second condition (\ref{Fclasmetric}) and since, $F|_P=J_P,F^2|_P=-Id$, (\ref{clCRF2}) is equivalent to
\begin{equation}\label{aux1} \gamma([JX,Z],JY)=\gamma([X,Z],Y),\;\forall X,Y\in P,Z\in Q.
\end{equation}
Indeed, (\ref{aux1}) means that the left hand side of (\ref{clCRF2}) is orthogonal to $P$ and it is also orthogonal to $Q$ because $F|_Q=0$.

We shall express (\ref{aux1}) using the equalities
$$\begin{array}{c}[X,Z]=\nabla^\gamma_XZ-\nabla^\gamma_ZX,\; [JX,Z]=\nabla^\gamma_{JX}Z-\nabla^\gamma_Z(JX),\vspace*{2mm}\\ \nabla^\gamma_Z(JY)=(\nabla^\gamma_ZJ)Y+J\nabla^\gamma_ZY\end{array}$$
and the $\gamma$-compatibility if $J$. The result is
\begin{equation}\label{aux2} \gamma((\nabla^\gamma_ZJ)X,JY)-\gamma(\nabla^\gamma_{JX}Z,JY)+\gamma(\nabla^\gamma_XZ,Y)=0.
\end{equation}

On the other hand, we have
$$\begin{array}{l}-\gamma(\nabla^\gamma_{JX}Z,JY)=\gamma(J\nabla^\gamma_{JX}Z,Y)= \gamma(\nabla^\gamma_{JX}(JZ),Y)-\gamma((\nabla^\gamma_{JX}J)Z,Y),\vspace*{2mm}\\ \gamma(\nabla^\gamma_XZ,Y)= -\gamma(\nabla^\gamma_X(J^2Z),Y)= -\gamma((\nabla^\gamma_XJ)(JZ),Y)+\gamma(\nabla^\gamma_X(JZ),JY),\end{array}$$
and it follows that (\ref{aux2}) is equivalent to
$$\begin{array}{c} \gamma((\nabla^\gamma_ZJ)X,JY) -\gamma((\nabla^\gamma_{JX}J)Z,Y) +\gamma(\nabla^\gamma_{JX}(JZ),Y)\vspace*{2mm}\\ -\gamma((\nabla^\gamma_XJ)(JZ),Y)+\gamma(\nabla^\gamma_X(JZ),JY)=0.\end{array}$$

Then, since $Z\in Q$ implies $JZ\in\nu N$, the third and fifth term of the previous equality may be expressed using the Gauss-Weingarten equations and the relation between the Weingarten operator and the second fundamental form. As a result, we get the following equivalent form of the condition (\ref{clCRF2})
\begin{equation}\label{newclCRF2}\begin{array}{c} \gamma((\nabla^\gamma_ZJ)X,JY) -\gamma((\nabla^\gamma_{JX}J)Z,Y) -\gamma((\nabla^\gamma_XJ)(JZ),Y)\vspace*{2mm}\\ =\gamma(b(JX,Y)+b(X,JY),JZ).\end{array} \end{equation}

To continue, we recall that the integrability of $J$ is equivalent to the following equality \begin{equation}\label{eqdinKN}
2\gamma(\nabla^\gamma_XJ(Y),Z)=d\Omega(X,Y,Z)-d\Omega(X,JY,JZ),\,\forall X,Y,Z\in TM\end{equation}
(this result is given by Proposition IX.4.2 of \cite{KN} with our conventions for the sign of $\Omega$ and the evaluation of the exterior differential). We also recall the equality
\begin{equation}\label{eqptdOmega} d\Omega(JZ,JX,JY)=d\Omega(JZ,X,Y)+d\Omega(Z,JX,Y)+d\Omega(Z,X,JY) \end{equation} (check for arguments of complex type $(1,0),(0,1)$).

Modulo (\ref{eqdinKN}) and (\ref{eqptdOmega}) condition (\ref{newclCRF2}) becomes
\begin{equation}\label{clCRF3} d\Omega(Z,X,JY)+d\Omega(Z,JX,Y)=2[\gamma(b(JX,Y)+b(X,JY),JZ)]. \end{equation}
In (\ref{clCRF3}) the left hand side is skew symmetric in $X,Y$ and the right hand side is symmetric. Therefore, the equality holds iff both of its sides vanish and the replacement of $Y$ by $JY$ shows that the result is exactly (\ref{eqclind}).
\end{proof}
\begin{corol}\label{corol1} If $N$ is a totally umbilical (in particular, totally geodesic) submanifold of a K\"ahler manifold $M$, the induced F structure of $N$ is a classical CRF structure.
\end{corol}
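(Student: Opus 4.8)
The plan is to apply Theorem \ref{CRFclind} directly and check that its two conditions (\ref{eqclind}) hold automatically under the present hypotheses. First I would dispose of the $d\Omega$ condition: since $M$ is K\"ahler, the K\"ahler form $\Omega$ is closed, so $d\Omega=0$ identically. In particular both sides of the first equality in (\ref{eqclind}) vanish, and that condition is satisfied trivially for all $X,Y\in TN\cap(JTN)$ and $Z\in TN\cap(J\nu N)$.

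Next I would treat the second fundamental form condition using the totally umbilical hypothesis, which means $b(X,Y)=\gamma(X,Y)H$ for a fixed normal field $H$ (the mean curvature vector), the totally geodesic case being $H=0$. For $X,Y\in TN\cap(JTN)$, the definition (\ref{clasind}) of $F$ on $P=TN\cap(JTN)$ gives $FX=JX$ and $FY=JY$. Since $(\gamma,J)$ is Hermitian, $J$ is a $\gamma$-isometry, so $\gamma(FX,FY)=\gamma(JX,JY)=\gamma(X,Y)$. Consequently
$$b(FX,FY)-b(X,Y)=\bigl(\gamma(FX,FY)-\gamma(X,Y)\bigr)H=0,$$
and the second equality in (\ref{eqclind}) holds for every $Z\in TN\cap(J\nu N)$. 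With both conditions of Theorem \ref{CRFclind} verified, the induced F structure of $N$ is a classical CRF structure.

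There is no substantial obstacle here: the K\"ahler assumption removes the $d\Omega$ terms, while the umbilical assumption forces $b(FX,FY)$ and $b(X,Y)$ to coincide because $F$ restricts to the $\gamma$-isometry $J$ on $P$. The only point worth flagging is the identity $\gamma(FX,FY)=\gamma(X,Y)$ on $P$, which is precisely where the compatibility of $J$ with $\gamma$ is used; everything else is immediate specialization of the theorem.
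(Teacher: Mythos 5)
Your proposal is correct and follows the paper's own route: the paper simply invokes Theorem \ref{CRFclind} and states that conditions (\ref{eqclind}) ``obviously hold,'' and your argument supplies exactly the details behind that remark --- $d\Omega=0$ in the K\"ahler case kills the first condition, while $b(X,Y)=\gamma(X,Y)H$ together with $\gamma(FX,FY)=\gamma(JX,JY)=\gamma(X,Y)$ on $P=TN\cap J(TN)$ kills the second. Nothing is missing; your explicit verification is a sound filling-in of what the paper leaves implicit.
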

\begin{proof} Under the hypotheses of the corollary, conditions (\ref{eqclind}) obviously hold.
\end{proof}
\begin{corol}\label{corol2} If $N$ is an $\Omega$-coisotropic submanifold of the Hermitian manifold $(M,\gamma,J)$ it is a CR-submanifold and the induced F structure is classical CRF iff the first condition (\ref{eqclind}) holds and the second fundamental form satisfies the equality
$$ b(FX,FY)=b(X,Y),\;\;\forall X,Y\in im\,F.$$
\end{corol}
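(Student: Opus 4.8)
The plan is to first pin down the linear-algebraic shape of an $\Omega$-coisotropic submanifold, and then feed it into Theorem \ref{CRFclind}, showing that coisotropy is exactly what collapses the two conditions (\ref{eqclind}) into the single tensorial condition in the statement. First I would recall that $N$ being $\Omega$-coisotropic means $(TN)^{\Omega}\subseteq TN$, where $(TN)^{\Omega}$ is the $\Omega$-orthogonal complement. From $\Omega(X,Y)=\gamma(JX,Y)$ one computes that $X\in(TN)^{\Omega}$ iff $\gamma(JX,Y)=0$ for all $Y\in TN$, i.e. iff $JX\in\nu N$; using $J^{-1}=-J$ this gives $(TN)^{\Omega}=J(\nu N)$. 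Hence the coisotropy condition reads $J(\nu N)\subseteq TN$, so the second summand of (\ref{restrcond}) is $Q:=TN\cap J(\nu N)=J(\nu N)$, while the first is $P:=TN\cap J(TN)=\mathrm{im}\,F$.

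Next I would verify that $N$ is indeed a CR-submanifold, i.e. that $TN=P\oplus Q$ with constant ranks. Taking $\gamma$-orthogonal complements and using that $J$ is $\gamma$-orthogonal yields $(TN+J(TN))^{\perp_\gamma}=\nu N\cap J(\nu N)$, which is $\{0\}$ since $J(\nu N)\subseteq TN$; therefore $TN+J(TN)=T_NM$ and $\dim P=2k-2n$. On the other hand $\dim Q=\dim J(\nu N)=2n-k$, and $P\cap Q=\{0\}$: any $v$ in the intersection satisfies $v=Jw$ with $w\in TN$ so $Jv=-w\in TN$, while $v\in J(\nu N)$ gives $Jv\in\nu N$, forcing $Jv\in TN\cap\nu N=\{0\}$ and hence $v=0$. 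Since $\dim P+\dim Q=k=\dim TN$, the decomposition (\ref{restrcond}) holds with constant rank, so $N$ is a CR-submanifold.

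Finally I would apply Theorem \ref{CRFclind}. Because $\mathrm{im}\,F=P$ and $F|_{P}=J|_{P}$, the second condition (\ref{eqclind}) reads $\gamma(b(JX,JY)-b(X,Y),JZ)=0$ for all $X,Y\in P$ and $Z\in Q$. The one point that genuinely uses coisotropy is the observation that, as $Z$ ranges over $Q=J(\nu N)$, the vector $JZ$ ranges over $J(Q)=J^{2}(\nu N)=\nu N$, i.e. over the whole normal bundle. Since $b(JX,JY)-b(X,Y)$ is $\nu N$-valued and $\gamma$ is nondegenerate on $\nu N$, the second condition (\ref{eqclind}) is therefore equivalent to $b(FX,FY)=b(X,Y)$ for all $X,Y\in\mathrm{im}\,F$, while the first condition (\ref{eqclind}) is carried over verbatim. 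Combined with Theorem \ref{CRFclind} this gives exactly the asserted equivalence. The main obstacle is precisely this last identification $\{JZ:Z\in Q\}=\nu N$, which upgrades the weak requirement ``orthogonal to every $JZ$'' to the genuine equality of normal-valued tensors; the rest is bookkeeping on top of Theorem \ref{CRFclind}.
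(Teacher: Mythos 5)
Your proposal is correct and follows essentially the same route as the paper: identify $(TN)^{\Omega}=J(\nu N)\subseteq TN$ so that $Q=J(\nu N)$, and then observe that $JZ$ sweeps out all of $\nu N$, upgrading the second condition (\ref{eqclind}) to the tensorial identity $b(FX,FY)=b(X,Y)$. The only difference is that you spell out the dimension-count verification that $N$ is a CR-submanifold, which the paper dismisses as well known.
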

\begin{proof}
It is well known that $N$ is a CR-submanifold and, in this case, (\ref{restrcond}) takes the form
$$ TN=(TN\cap J(TN))\oplus J(\nu N).$$
Indeed, we have $J(\nu N)=T^{\perp_\Omega}N\subseteq TN$ and the second term of the right hand side of (\ref{restrcond}) is $J(\nu N)$. On the other hand, it follows easily that $(J\nu N)^{\perp_{\gamma'}}=TN\cap J(TN)$. Then, the assertion of the corollary follows from the fact that, in the second condition (\ref{eqclind}), $JZ$ runs through the whole  normal bundle $\nu N$.
\end{proof}

With the preparations done, we now give the answer to the motivating question of the note. It turns out to be a straightforward extension of Proposition 3.3 of \cite{Vnou}.
\begin{theorem}\label{inducingCRFK} Let $\iota:N\hookrightarrow M$ be a generalized CR-submanifold of the generalized K\"ahler manifold $(M,\gamma,\psi,J_\pm)$. Then, the induced generalized metric F structure $\mathcal{F}$ of $N$ is a generalized CRFK structure iff, $\forall Z\in Q_\pm=ker\,F_\pm$, one has
\begin{equation}\label{condindCRFK}\begin{array}{l} d\psi(X,Y,J_\pm Z)=d\psi(J_\pm X,J_\pm Y,J_\pm Z),\;\;\forall X,Y\in P_\pm=im\,F_\pm,\vspace*{2mm}\\ d\psi(X,J_\pm Y,J_\pm Z)=\mp2\gamma(b(X,F_\pm Y),J_\pm Z),\;\;\forall X\in TN,Y\in P_\pm.\end{array} \end{equation} \end{theorem}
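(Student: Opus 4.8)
The plan is to run the characterization recalled just before the statement: the induced pair $(G',\mathcal{F})$ is generalized CRFK if and only if (a) both induced structures $F_\pm$ are classical CRF, and (b) the two equalities (\ref{CRFK6}) hold on $N$ for the induced data $(\gamma'=\iota^*\gamma,\psi'=\iota^*\psi,F_\pm)$. Since $(M,G,\mathcal{J},\mathcal{J}')$ is generalized K\"ahler, each pair $(\gamma,J_\pm)$ is a genuine Hermitian structure, and $N$, being a generalized CR-submanifold, is a CR-submanifold for both of them. Hence Theorem \ref{CRFclind} applies verbatim to each sign and property (a) is equivalent to the two equalities (\ref{eqclind}), written for $J_+$ and for $J_-$ separately, on $P_\pm=im\,F_\pm$ and $Q_\pm=ker\,F_\pm$. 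Everything then reduces to re-expressing these intrinsic conditions through the ambient form $d\psi$ and the second fundamental form $b$.

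First I would translate the fundamental forms $\Omega_\pm(X,Y)=\gamma(J_\pm X,Y)$ into $d\psi$. Combining the integrability identity (\ref{eqdinKN}) for $J_\pm$ with the generalized K\"ahler relation (\ref{relpsiJ}) and the algebraic type identity (\ref{eqptdOmega}) (which holds for any $3$-form) gives the relation, standard on a generalized K\"ahler manifold, $d\Omega_\pm(X,Y,Z)=\mp d\psi(J_\pm X,J_\pm Y,J_\pm Z)$. Feeding this into the first equality (\ref{eqclind}), namely $d\Omega_\pm(X,Y,Z)=d\Omega_\pm(J_\pm X,J_\pm Y,Z)$, and using $J_\pm^2=-Id$, immediately turns it into $d\psi(X,Y,J_\pm Z)=d\psi(J_\pm X,J_\pm Y,J_\pm Z)$ for $X,Y\in P_\pm$, which is exactly the first line of (\ref{condindCRFK}).

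For the second line I would start from (\ref{relpsiJ}) evaluated with third argument $J_\pm Z$, which is normal because $Z\in Q_\pm$ forces $J_\pm Z\in\nu N$; this yields $\gamma((\nabla^\gamma_XJ_\pm)Y,J_\pm Z)=\mp\frac{1}{2}[d\psi(X,J_\pm Y,J_\pm Z)-d\psi(X,Y,Z)]$. Expanding the left side as $\gamma(\nabla^\gamma_X(J_\pm Y),J_\pm Z)-\gamma(\nabla^\gamma_X Y,Z)$ and applying the Gauss equation (\ref{G-W}) isolates the genuinely normal term $\gamma(\nabla^\gamma_X(J_\pm Y),J_\pm Z)=\gamma(b(X,F_\pm Y),J_\pm Z)$, the remaining piece $\gamma(\nabla^\gamma_X Y,Z)=\gamma(\nabla^{\gamma'}_X Y,Z)$ being purely intrinsic to $N$. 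The second line of (\ref{condindCRFK}) is then equivalent to the vanishing of the intrinsic remainder, i.e. to $\gamma(\nabla^{\gamma'}_X Y,Z)=\mp\frac{1}{2}d\psi'(X,Y,Z)$ for $X\in TN$, $Y\in P_\pm$, $Z\in Q_\pm$, and this is precisely the information carried by property (b): the equalities (\ref{CRFK6}) on $N$, which tie $\nabla^{\gamma'}F_\pm$ to $d\psi'=\iota^*d\psi$, together with the symmetric second condition (\ref{eqclind}) coming from (a), deliver exactly this remainder. Running all substitutions in reverse shows the two lines of (\ref{condindCRFK}) reconstitute both (a) and (b), establishing the equivalence.

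The hard part will be the bookkeeping in the second line. Because $F_\pm$ is only a partial complex structure — equal to $J_\pm$ on $P_\pm$ and zero on $Q_\pm$ — the derivatives $(\nabla^{\gamma'}_X F_\pm)Y$ and $F_\pm(\nabla^{\gamma'}_X F_\pm)Y$ in (\ref{CRFK6}) must be evaluated case by case according to whether each argument sits in $P_\pm$, $Q_\pm$ or $\nu N$, and one must track which Gauss-Weingarten contribution (the form $b$ or the operator $W_U$, linked by $\gamma(W_U X,Y)=\gamma(b(X,Y),U)$) carries each term. The delicate point is the reconciliation of the intrinsic equality (\ref{CRFK6}), whose $d\psi'$ only sees tangent arguments, with the ambient conditions (\ref{condindCRFK}), which carry the normal slot $J_\pm Z$; it is exactly here that (\ref{relpsiJ}) is used to trade a normal covariant-derivative component for an ambient $d\psi$ with a normal entry. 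This is the step that extends the hypersurface computation of Proposition 3.3 of \cite{Vnou}, where $\nu N$ is one-dimensional and the case analysis essentially collapses.
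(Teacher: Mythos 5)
Your architecture is the paper's own: reduce the CRFK property to (a) the classical CRF conditions (\ref{eqclind}) for both $F_\pm$, obtained from Theorem \ref{CRFclind}, plus (b) condition (\ref{CRFK6}) on $(N,\gamma')$; translate the first equality (\ref{eqclind}) through the generalized K\"ahler relation between $d\Omega_\pm$ and $d\psi$ (the paper's (\ref{condGualt}); note that your version $d\Omega_\pm(X,Y,Z)=\mp d\psi(J_\pm X,J_\pm Y,J_\pm Z)$ has the sign flipped --- it should be $\pm$ --- but the constant cancels on both sides of the first equality (\ref{eqclind}), so this slip is harmless); and convert the remaining content into the second line of (\ref{condindCRFK}) by evaluating (\ref{relpsiJ}) on the normal vector $J_\pm Z$ and invoking Gauss--Weingarten. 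Your intermediate intrinsic condition $\gamma'(\nabla^{\gamma'}_XY,Z)=\mp\frac{1}{2}d\psi(X,Y,Z)$ for $X\in TN$, $Y\in P_\pm$, $Z\in Q_\pm$ is exactly the paper's (\ref{aux01}) up to the swap $Y\leftrightarrow Z$ (the paper reaches it from case (ii) of (\ref{CRFK6}) with $Y\in Q_\pm$, $Z\in P_\pm$ and then applies the Weingarten equation; you run the same computation in the opposite direction), and your signs agree with the statement.

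There is, however, a genuine gap in the sufficiency direction, located exactly where your text is vaguest. Condition (a) consists of \emph{two} equalities (\ref{eqclind}); your accounting matches the first with line 1 of (\ref{condindCRFK}) and property (b) with line 2, but the second equality (\ref{eqclind}), i.e. $\gamma(b(F_\pm X,F_\pm Y)-b(X,Y),J_\pm Z)=0$ (the paper's (\ref{aux0})), is matched with neither. Your sentence assigning it a role in ``delivering the remainder'' mislocates it: (b) alone is equivalent to the remainder, once one actually checks --- which you defer as bookkeeping, but which is needed for both implications --- that the case $Y\in P_\pm$ of (\ref{CRFK6}) holds automatically on a generalized K\"ahler ambient (it reduces, via Gauss, to (\ref{relpsiJ}) with $Z$ replaced by $J_\pm Z$) and that $Z\in Q_\pm$ gives a triviality. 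Consequently ``running all substitutions in reverse'' does not reconstitute (a): you must \emph{derive} (\ref{aux0}) from the two lines of (\ref{condindCRFK}), since it is part of the CRF property of $F_\pm$ but not among the theorem's stated conditions. The paper closes this loop with a specific argument your proposal lacks: specialize line 2 to $X,Y\in P_\pm$, once as written (giving $\gamma(b(X,Y),J_\pm Z)=\mp\frac{1}{2}d\psi(X,Y,J_\pm Z)$ after replacing $Y$ by $-F_\pm Y$) and once with $X,Y$ replaced by $F_\pm X,F_\pm Y$, then compare the two right-hand sides using line 1; their difference is precisely (\ref{aux0}). Without this step your argument establishes necessity of (\ref{condindCRFK}) but not sufficiency.
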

\begin{proof}
Since $M$ is generalized K\"ahler, $(\gamma,J_\pm)$ are Hermitian structures, hence, (\ref{eqdinKN}) holds for these two structures, and it implies that condition (\ref{relpsiJ}) is equivalent to \cite{Gualt}
\begin{equation}\label{condGualt} d\psi(X,Y,Z)=\mp d\Omega_\pm(J_\pm X,J_\pm Y,J_\pm Z), \; \forall X,Y,Z\in TM.\end{equation}

For $\mathcal{F}$ to be CRFK, the first required condition, condition (a), is that $(\gamma',F_\pm)$ are classical metric CRF structures, i.e., that conditions (\ref{eqclind}) hold for both structures. Modulo (\ref{condGualt}) the first condition (\ref{eqclind}) becomes the first condition (\ref{condindCRFK}) and the second condition (\ref{eqclind}) is
\begin{equation}\label{aux0} \gamma(b(F_\pm X,F_\pm Y)-b(X,Y),J_\pm Z)=0,\;\;\forall X,Y\in P_\pm,Z\in Q_\pm. \end{equation}

The second required condition, condition (b), is (\ref{CRFK6}) on $(N,\gamma')$, where we may assume $Z\in P_\pm$ since the condition holds trivially if $F_\pm Z=0$. Then, $F_\pm^2Z=-Z$ and, using (\ref{Fclasmetric}), (\ref{CRFK6}) becomes
\begin{equation}\label{CRFK7} \gamma'((\nabla^{\gamma'}_XF_\pm)Y,F_\pm Z)= \mp\frac{1}{2}[d\psi(X,F_\pm Y,F_\pm Z)-d\psi(X,Y,Z)],\end{equation}
with $X,Y\in TN,Z\in P_\pm$.

We consider the cases (i) $Y\in P_\pm$, (ii) $Y\in Q_\pm$ separately.
In case (i), since $F_\pm|_{P_\pm}=J_\pm|_{P_\pm}$, the Gauss equation yields
$$\gamma'((\nabla^{\gamma'}_XF_\pm)Y,F_\pm Z)  =\gamma((\nabla^\gamma_XJ_\pm)Y,J_\pm Z), $$
which makes (\ref{CRFK7}) take the form (\ref{relpsiJ}) with $Z$ replaced by $J_\pm Z$. Therefore it holds because $M$ is generalized K\"ahler.

In case (ii), we have $F_\pm Y=0$ and we get
$$\gamma'(F_\pm(\nabla^{\gamma'}_XF_\pm)Y,Z)=
-\gamma'(F^2_\pm\nabla^{\gamma'}_XY,Z)= -\gamma'(\nabla^{\gamma'}_XY,F^2_\pm Z)=\gamma'(\nabla^{\gamma'}_XY,Z),$$
which, together with the Gauss equation, changes (\ref{CRFK7}) into
\begin{equation}\label{aux01}\gamma(\nabla^{\gamma}_XY,Z) =\pm\frac{1}{2}d\psi(X,Y,Z).\end{equation}

Furthermore, we shall take into account that $Y\in Q_\pm$ implies $J_\pm Y\in\nu N$ and use the Weingarten equation. We get
$$\begin{array}{r}\gamma(\nabla^{\gamma}_XY,Z)=\gamma(J_\pm\nabla^{\gamma}_XY,J_\pm Z)= \gamma(\nabla^{\gamma}_X(J_\pm Y),J_\pm Z)- \gamma((\nabla^{\gamma}_XJ_\pm)Y,J_\pm Z) \vspace*{2mm}\\ \stackrel{(\ref{G-W}),(\ref{relpsiJ})}{=}-\gamma(b(X,J_\pm Z),J_\pm Y) \mp\frac{1}{2}[d\psi(X,J_\pm Y,J_\pm Z)-d\psi(X,Y,Z)].\end{array}$$
Accordingly, condition (\ref{aux01}) becomes
$$\gamma(b(X,J_\pm Z),J_\pm Y)=\pm\frac{1}{2}d\psi(X,J_\pm Y,J_\pm Z),$$
which is the second condition (\ref{condindCRFK}) with the replacements
$Y\mapsto Z,Z\mapsto Y$.

To end the proof we only have to remark that conditions (\ref{condindCRFK}) imply (\ref{aux0}). This follows by noticing that, if we replace $X\in P_\pm$ by $F_\pm X,\,X\in P_\pm$, the second condition (\ref{condindCRFK}) becomes
$$\gamma(b(X,Y),J_\pm Z)=\mp\frac{1}{2}d\psi(X,Y,J_\pm Z),\;,\;\forall X,Y\in P_\pm,Z\in Q_\pm$$
and by using the first condition (\ref{condindCRFK}).
\end{proof}
\begin{corol}\label{corol3} If $M$ is a generalized K\"ahler manifold with a closed associated form $\psi$, then, any totally geodesic, generalized CR-submanifold of $M$ has an induced CRFK structure.\end{corol}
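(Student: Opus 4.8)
The plan is to verify that the two conditions \eqref{condindCRFK} of Theorem \ref{inducingCRFK} hold automatically under the two simplifying hypotheses, namely that $\psi$ is closed and that $N$ is totally geodesic. These are exactly the sufficient conditions packaged into that theorem, so the corollary should follow by inspection once I check each side of each equation.

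First I would use $d\psi=0$. Every term appearing on the left-hand side of both equations in \eqref{condindCRFK} is a value of $d\psi$ on various arguments, so $d\psi=0$ forces all of these to vanish identically. Thus the first condition \eqref{condindCRFK}, $d\psi(X,Y,J_\pm Z)=d\psi(J_\pm X,J_\pm Y,J_\pm Z)$, reduces to the trivially true statement $0=0$. Likewise, the left-hand side $d\psi(X,J_\pm Y,J_\pm Z)$ of the second condition vanishes, so that condition collapses to $0=\mp 2\gamma(b(X,F_\pm Y),J_\pm Z)$, i.e. it becomes the requirement that the second fundamental form term vanish.

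Next I would invoke total geodesy. By definition a totally geodesic submanifold has second fundamental form $b\equiv 0$, so $\gamma(b(X,F_\pm Y),J_\pm Z)=0$ for all admissible $X,Y,Z$, and the remaining half of the second condition holds as well. Hence both conditions \eqref{condindCRFK} are satisfied for each of the two structures $J_\pm$, and Theorem \ref{inducingCRFK} directly yields that the induced generalized metric F structure $\mathcal{F}$ is a generalized CRFK structure. I do not expect any genuine obstacle here: the content of the corollary is entirely front-loaded into Theorem \ref{inducingCRFK}, and the only thing to confirm is that closedness of $\psi$ kills the curvature-of-$\psi$ terms while total geodesy kills the extrinsic $b$-terms, after which nothing remains to verify.
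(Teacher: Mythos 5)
Your proposal is correct and coincides with the paper's own (one-line) proof: the paper likewise observes that the conditions (\ref{condindCRFK}) of Theorem \ref{inducingCRFK} hold trivially, since $d\psi=0$ annihilates every left-hand side and $b\equiv 0$ annihilates the second fundamental form terms. Nothing further is needed.
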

\begin{proof} The assertion is an obvious consequence of conditions (\ref{condindCRFK}). \end{proof}
\begin{corol}\label{corol4} If $M$ is a generalized K\"ahler manifold with a closed associated form $\psi$ and $N$ is a bi-coisotropic submanifold, then, the induced generalized F structure is CRFK iff $b(X,Y)=0$, $\forall X\in TN,Y\in P_\pm$.\end{corol}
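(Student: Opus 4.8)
The plan is to apply Theorem \ref{inducingCRFK} and to exploit the two simplifications provided by the hypotheses: that $\psi$ is closed and that $N$ is coisotropic for both Hermitian structures. By definition, a bi-coisotropic submanifold is $\Omega_\pm$-coisotropic for both of the structures $(\gamma,J_\pm)$; hence, exactly as in the proof of Corollary \ref{corol2}, $N$ is a generalized CR-submanifold for which the decomposition (\ref{restrcond}) specializes, for each sign, to $TN=(TN\cap J_\pm(TN))\oplus J_\pm(\nu N)$. In particular $Q_\pm=ker\,F_\pm=J_\pm(\nu N)$, so that, as $Z$ ranges over $Q_\pm$, the vector $J_\pm Z$ ranges over the whole normal bundle $\nu N$. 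This is the key geometric input I would record first.

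Next I would specialize the conditions (\ref{condindCRFK}) of Theorem \ref{inducingCRFK} to the present situation. Since $\psi$ is closed we have $d\psi=0$, so the first condition (\ref{condindCRFK}) is satisfied automatically, while the second reduces to
\[
\gamma(b(X,F_\pm Y),J_\pm Z)=0,\quad\forall X\in TN,\ Y\in P_\pm,\ Z\in Q_\pm .
\]
Thus $\mathcal{F}$ is CRFK if and only if this single family of scalar equations holds for both signs.

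It then remains to strip the operators $F_\pm$ and $J_\pm$ from this condition. Because $P_\pm=\operatorname{im}F_\pm=TN\cap J_\pm(TN)$ is $J_\pm$-invariant and $F_\pm|_{P_\pm}=J_\pm|_{P_\pm}$ is an isomorphism of $P_\pm$, the element $F_\pm Y$ runs through all of $P_\pm$ as $Y$ does; and by the coisotropy remark above $J_\pm Z$ runs through all of $\nu N$. Hence the displayed condition is equivalent to $\gamma(b(X,Y),U)=0$ for all $X\in TN$, $Y\in P_\pm$, $U\in\nu N$. Since $b(X,Y)\in\nu N$ and $\gamma$ is nondegenerate on the normal bundle, this is precisely $b(X,Y)=0$ for all $X\in TN$ and $Y\in P_\pm$, which is the assertion.

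The only point requiring care is the reduction in the last paragraph: one must be sure that, under the bi-coisotropic hypothesis, $J_\pm Z$ genuinely exhausts $\nu N$ (this is exactly what the $\Omega_\pm$-coisotropy furnishes, as in Corollary \ref{corol2}), and that $\gamma|_{\nu N}$ is nondegenerate, so that the vanishing of all the pairings forces $b$ itself to vanish. Everything else is a direct substitution of $d\psi=0$ into Theorem \ref{inducingCRFK}, so I expect the argument to be short.
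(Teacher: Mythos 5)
Your proof is correct and takes essentially the same route as the paper's: the paper likewise disposes of the first condition of (\ref{condindCRFK}) via $d\psi=0$ and concludes from the bi-coisotropy that $J_\pm Z$ runs through all of $\nu N$ (citing the proof of Corollary \ref{corol2}), forcing $b(X,F_\pm Y)=0$. Your additional explicit steps --- that $F_\pm|_{P_\pm}=J_\pm|_{P_\pm}$ is an automorphism of $P_\pm$ so $F_\pm Y$ exhausts $P_\pm$, and that nondegeneracy of $\gamma$ on $\nu N$ then yields $b(X,Y)=0$ for all $X\in TN$, $Y\in P_\pm$ --- merely fill in details the paper leaves implicit.
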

\begin{proof} By bi-coisotropic we understand that $N$ is coisotropic with respect to the two K\"ahler forms $\Omega_\pm$. The assertion follows because $J_\pm Z$ of the right hand side of (\ref{condindCRFK}) runs through the whole bundle $\nu N$ (see the proof of Corollary \ref{corol2}).
\end{proof}

Because of the symmetry of the second fundamental form, the CRFK condition of Corollary \ref{corol4} may also be seen as $b(X,Y)=0$, $\forall X\in P_\pm,Y\in TN$. Thus, it follows that, if the induced structure $F$ of the corollary is CRFK, and if $b(Z,Z')=0$ for either $Z,Z'\in Q_+$ or $Z,Z'\in Q_-$, then, $N$ is a totally geodesic submanifold of $M$.

{\small Department of Mathematics, University of Haifa, Israel.\\ E-mail: vaisman@math.haifa.ac.il}
\end{document}